\DeclareMathOperator{\Ind}{Ind}
\newtheorem{thm}{Theorem}
\newtheorem{prop}[thm]{Proposition}
\newtheorem{la}[thm]{Lemma}
\newtheorem{cor}[thm]{Corollary}
\theoremstyle{definition}
\theoremstyle{remark}
\numberwithin{equation}{section}
\def \P {{\mathbb P}}
\def \A {{\mathbb A}}
\def \E {{\mathcal E}}
\def \Z {{\mathbb Z}}
\def \rk {{\rm rk}}
\begin{document}
\title{Embedding problems and open subgroups}
\author{David Harbater and Katherine Stevenson}

\email{harbater@math.upenn.edu, katherine.stevenson@csun.edu}
\thanks{The authors were respectively supported in part by NSF grants 
DMS-0901164 and IIS-0534984.}

\date{December 6, 2009.\\
\textit{2010 Mathematics Subject Classification.} Primary {14G17, 14H30, 
20E18}, secondary {12E30, 14G32, 20F34}.\\
\textit{Key words and phrases.} Fundamental group, affine curve, 
characteristic $p$, embedding problem, omega-free.}

\maketitle

 \begin{abstract} We study the properties of the fundamental group of an affine curve over an algebraically closed field of characteristic $p$, from the point of view of embedding problems.  In characteristic zero, the fundamental group is free, but in characteristic $p$ it is not even $\omega$-free.  In this paper we show that it is ``almost $\omega$-free,'' in the sense that each finite embedding problem has a proper solution when restricted to some open subgroup.  We also prove that embedding problems can always be properly solved over the given curve if suitably many additional branch points are allowed, in locations that can be specified arbitrarily; this strengthens a result of the first author. \end{abstract}

\maketitle
\date{today}

\section{Introduction.}

This paper studies the structure of the \'etale fundamental group of an affine curve over an algebraically closed field of characteristic $p$, and shows that in a certain sense it is ``not too far from being free.''

Consider an affine curve $C$ over an algebraically closed field $k$.  Thus $C = X - \Sigma$, where $X$ is a smooth projective curve of genus $g \ge 0$ and $|\Sigma| = r > 0$.  If $k$ has characteristic zero, then $\pi_1(C)$ is a free profinite group on $2g+r-1$ generators \cite[XIII, Cor.~2.12]{Gr}.  In contrast, if $k$ has characteristic $p > 0$, then $\pi_1(C)$ is infinitely generated as a profinite group, since by Artin-Schreier theory there are infinitely many linearly disjoint $\Z/p\Z$-Galois \'etale covers of $C$.  Moreover it cannot be free profinite (of infinite rank), since a prime-to-$p$ group is a quotient of $\pi_1(C)$ if and only if it has at most $2g+r-1$ generators \cite[XIII, Cor.~2.12]{Gr}).  For this same reason, $\pi_1(C)$ is not even $\omega$-free; i.e.\ it is not the case that every finite embedding problem for $\pi_1(C)$ has a proper solution \cite[p.~652]{FJ}.  (Infinitely generated free profinite groups are $\omega$-free \cite[Lemma~25.1.2]{FJ}; the converse holds for countably generated groups \cite[p.~167]{Iw}, but not in general \cite[Example~3.1]{Ja}.)

The main result of this paper (Theorem~\ref{app1}) is to show that $\Pi = \pi_1(C)$ has the weaker property of being \textit{almost $\omega$-free}: for every finite embedding problem $\mathcal{E} = (\alpha : \Pi \to G, f: \Gamma \to G)$ there is an open subgroup $H \subset \Pi$ 
such that $\alpha|_H: H \to G$ is surjective and the induced embedding problem $\mathcal{E}_H = (\alpha|_H, f)$ has a proper solution.

This result can be interpreted geometrically as follows.  
We are given a $G$-Galois cover $\phi:Y \to X$ ramified at most over $\Sigma$.  
A proper solution to the original embedding problem is a $\Gamma$-Galois cover $Z \to X$ that dominates $\phi: Y \to X$ and is unramified away from $\Sigma$.  The almost $\omega$-free
property asserts that there exists a cover $\psi: X' \to X$ ramified at most over $\Sigma$ such that the normalized pullback $\phi': Y' \to X'$ of $\phi$ via $\psi$ is a $G$-Galois cover and moreover there exists a $\Gamma$-Galois cover $Z' \to X'$ that dominates $Y' \to X'$ and is branched only at points of $X'$ lying over $\Sigma \subset X$.  Here the cover $\psi$ corresponds to the restriction $\alpha|_H$, where $H \subset \pi_1(X - \Sigma)$ is the open subgroup $\pi_1(X' - \psi^{-1}(\Sigma))$.

This paper is motivated by the fact that the absolute Galois group $G_{k(X)}$ of the function field of a smooth projective $k$-curve $X$ is free, for $k$ an algebraically closed field of arbitrary characteristic (\cite{Ha95}, \cite{Po}).  In characteristic zero this is natural since each $\pi_1(C)$ is free; and in that case the freeness of $G_{k(X)}$ was proven earlier by viewing this group as the inverse limit of the free groups $\pi_1(C)$ for dense affine open subsets $C \subset X$ \cite{Do}.  But in non-zero characteristic, the freeness of $G_{k(X)}$ is a bit mysterious, since the groups $\pi_1(C)$ are not free.  Still, this suggests that the groups $\pi_1(C)$ should satisfy some kind of freeness property; and this role is played here by the property of being almost $\omega$-free.  

In the process of proving our main theorem, we also obtain a result (Theorem~\ref{solveEP}) that strengthens the main theorem of \cite{Ha99}.  That theorem, which provided a more precise form of a key result in \cite{Ha95} and \cite{Po}, had asserted that an embedding problem for an affine curve over an algebraically closed field can be solved if a specific number of additional points are permitted.  Our result here shows that the positions of the additional points can be chosen arbitrarily. 

We also note the papers \cite{ku08} and \cite{ku09}, which discuss another way in which $\pi_1(C)$ is ``close to being free''.  Namely, those papers show that if $k$ is an algebraically closed field of characteristic $p$ then the commutator subgroup of $\pi_1(C)$ is free.  For 
other papers on related themes, see also \cite{Bo},  \cite{GSt}, \cite{Os}, \cite{Pr}, \cite{Ta}.

The structure of the current manuscript is as follows:  In Section~\ref{newram} we prove Proposition~\ref{spts}, which asserts that $G$-Galois branched covers have pullbacks that remain $G$-Galois but become unramified at sufficiently many points lying over the original branch locus.  Section~\ref{solve} proves results leading up to Theorem~\ref{solveEP} on the solution to embedding problems with prescribed additional branch points.  These are combined in Section~\ref{application} to obtain our main result, Theorem~\ref{app1}, that the fundamental group is almost $\omega$-free.  That section also discusses almost $\omega$-freeness in the characteristic zero case, where the fundamental group is finitely generated.

\medskip

{\it Terminology and Notation:}
If $G$ is a finite group and $p$ is a prime number, let $p(G)$ denote the subgroup of $G$ generated by its $p$-subgroups.  
This is a characteristic subgroup of $G$, and $G/p(G)$ is the maximal prime-to-$p$ quotient of $G$. 
A finite group $G$ is called {\it quasi-$p$} if $G = p(G)$.
A {\it finite embedding problem} $\E$ for a group $\Pi$ is a pair of surjections $(\alpha:\Pi \to G, f:\Gamma \to G)$, where $\Gamma$ and $G$ are finite groups.
A {\it weak solution} to $\mathcal{E}$ is a homomorphism $\gamma: \Pi \to \Gamma$ such that $f \circ \gamma = \alpha$.  We call $\gamma$
a {\it proper solution} to $\mathcal E$ if in addition it is surjective.
If $H$ is a subgroup of $\Pi$ and $\alpha|_{H}$ is a surjection onto $G$, then $\E' = (\alpha|_{H}:H \to G,  f:\Gamma \to G)$ is the \textit{induced} embedding problem for $H$.

In this paper we consider curves over an algebraically closed field $k$.  A {\it cover} of $k$-curves is a morphism $\phi:V \to U$ of smooth connected $k$-curves that is finite and generically separable.  
If $\phi:V \to U$ is a cover, its \emph{Galois group} $\mathrm{Gal}(V/U)$ is the group of $k$-automorphisms 
$\sigma$ of $V$ satisfying $\phi \circ \sigma = \phi$.  
If $G$ is a finite group, then a $G$-\emph{Galois cover} is a cover $\phi: V \to U$ 
\emph{together} with an inclusion $\rho: G \hookrightarrow \mathrm{Gal}(V/U)$ 
such that $G$ acts simply transitively on a generic geometric fibre of $\phi:V \to U$.  If we fix a base point of $U$, then the pointed $G$-Galois \'etale covers of $U$ correspond bijectively to the surjections $\alpha: \pi_1(U) \to G$, where $\pi_1(U)$ is the algebraic fundamental group of $U$.  The proper solutions to an embedding problem $\E = (\alpha:\pi_1(U) \to G, f:\Gamma \to G)$ for $\pi_1(U)$ then are in bijection to the pointed $\Gamma$-Galois covers $W \to U$ that dominate the pointed $G$-Galois cover $\phi: V \to U$ corresponding to $\alpha$.

\medskip

{\it Acknowledgment.} We thank Florian Pop for suggestions involving Section~3 of this paper.

\section{Removing branch points by pulling back.} \label{newram}

Fix an algebraically closed field $k$ of characteristic $p>0$.  
The goal of this section is to prove Proposition~\ref{spts} below, which says that for any branched cover of $k$-curves, there is a finite morphism to the base such that the inverse image of the original branch locus contains (many) points that are unramified in the normalized pullback of the given cover.  This will be used in conjunction with the key result of the next section in order to prove our main theorem.  The point is that since affine curves are not $\omega$-free, not all embedding problems can be properly solved; but by proceeding to a pullback via Proposition~\ref{spts}, we may enlarge the branch locus and thereby obtain a solution to the induced embedding problem.

\begin{la}\label{sizeSigma}
Let $X$ be a smooth connected projective $k$-curve and let $\Delta \subset X$ be a non-empty finite subset of $X$.   Given an integer $t>1$ and a $G$-Galois cover $\phi:Y \to X$ \'etale away from $\Delta$, there is a Galois cover $\pi:X_0 \to X$ of degree at least $t$ that is linearly disjoint from $Y \to X$, is branched only over $\Delta$, and such that $|\pi^{-1}(\Delta)| \ge t$.
\end{la}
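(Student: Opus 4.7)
My plan is to construct $\pi$ directly, splitting into cases by $|\Delta|$ and the genus $g$ of $X$. The unifying principle is that $|\pi^{-1}(\Delta)|$ is large either because some point of $\Delta$ is \'etale under $\pi$ (contributing $\deg \pi$ preimages) or because the inertia at a unique branch point has large index in $\mathrm{Gal}(X_0/X)$. In every case, linear disjointness from $Y$ will be arranged by choosing $\pi$ from an infinite family of candidate covers, avoiding the finitely many Galois subcovers coming from $Y$.

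When $|\Delta| \ge 2$, I would fix $P_0 \in \Delta$ and use Artin--Schreier theory on the affine curve $X \setminus \{P_0\}$: since we are in characteristic $p$, $H^1_{\text{\'et}}(X\setminus\{P_0\},\mathbb{F}_p)$ is infinite-dimensional over $\mathbb{F}_p$, while the $\Z/p$-subcovers of $Y$ branched only at $P_0$ span a finite-dimensional subspace. Selecting $n$ linearly independent classes outside that subspace and forming their compositum produces a connected $(\Z/p)^n$-Galois cover $\pi$ of $X$ branched only at $P_0$ and linearly disjoint from $Y$; because $\pi$ is \'etale at every point of $\Delta \setminus \{P_0\}$, we get $|\pi^{-1}(\Delta)| \ge (|\Delta|-1)p^n$, which, together with $\deg \pi = p^n$, exceeds $t$ once $n$ is large. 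When $|\Delta|=1$ but $g(X) \ge 1$, the prime-to-$p$ Tate module of $\mathrm{Jac}(X)$ provides an infinite prime-to-$p$ quotient of $\pi_1^{\text{\'et}}(X)$, so for some $\ell \ne p$ a connected \'etale $\Z/\ell^n$-cover of $X$ with $\ell^n \ge t$ exists and can be chosen linearly disjoint from $Y$; being \'etale, $|\pi^{-1}(\Delta)| = \deg \pi \ge t$.

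The remaining case $X = \P^1$ with $\Delta = \{P\}$ is where I expect the main obstacle: there is no unbranched point of $\Delta$ to exploit and no nontrivial \'etale cover of $\P^1$, so I must produce a non-abelian cover whose inertia at $P$ has large index. Here I would invoke Raynaud's theorem (Abhyankar's conjecture for $\A^1$) to realize $H = A_n$ as the Galois group of a connected cover $\pi: X_0 \to \P^1$ branched only at $P$, taking $n \ge \max(5,p)$ large enough that $|A_n| > |G|$ and $|A_n|/|I| \ge t$ for every cyclic-by-$p$ subgroup $I \le A_n$. The last condition is achievable since any such $I$ has order bounded by $p^{n/(p-1)} e^{O(\sqrt{n\log n})}$, while $|A_n| = n!/2$ grows factorially. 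Simplicity of $A_n$ together with $|A_n| > |G|$ forces any common Galois subcover of $\pi$ and $Y$ to be trivial, yielding linear disjointness; and since local inertia on curves in characteristic $p$ is always cyclic-by-$p$, we conclude $|\pi^{-1}(P)| = |A_n|/|I| \ge t$. The main difficulty relative to the other cases is the need for the full strength of Raynaud's theorem together with the group-theoretic fact that large alternating groups cannot have small-index cyclic-by-$p$ subgroups.
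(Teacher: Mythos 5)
Your proof is correct and reaches the lemma by a genuinely different route from the paper. The paper argues uniformly in all cases: it picks $n \ge \max(5,p)$ with $A_n$ not a quotient of $G$ and with the prime-to-$p$ part of $|A_n|$ at least $t$, invokes Abhyankar's conjecture for general curves (Ha94, Theorem 6.2) to realize $A_n$ as a Galois cover branched at a single point $\sigma \in \Delta$, and then --- this is the step you replace --- cites Pop's Theorem B to arrange that the inertia groups over $\sigma$ are exactly the Sylow $p$-subgroups, whence $|\pi^{-1}(\sigma)| = [A_n : \mathrm{Syl}_p] \ge t$ on the nose. You instead (i) dispose of the cases $|\Delta|\ge 2$ and $g\ge 1$ by elementary Artin--Schreier covers, respectively prime-to-$p$ abelian \'etale covers, and (ii) in the remaining case $(\P^1,\{P\})$ use only Raynaud's theorem plus a group-theoretic upper bound (the $p$-part of $n!$ times Landau's function) valid for \emph{every} possible inertia subgroup of $A_n$, so the index of the inertia is forced to be large whatever the inertia turns out to be. This trades the citation of Pop's inertia-control theorem (and, outside the one hard case, of Abhyankar for positive-genus curves) for a case division and an asymptotic estimate; both approaches work, and yours is more elementary in most cases while the paper's is shorter and uniform. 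Two small points to tighten: in the $(\Z/p)^n$ case, linear disjointness from $Y$ requires that the \emph{span} of your $n$ Artin--Schreier classes meet the finite-dimensional subspace coming from $Y$ only in $0$, not merely that each chosen class lie outside it (a sum of two classes outside a subspace can land in it); since the ambient space is infinite-dimensional this is immediate to arrange, but it is what you should say. Also, the inertia groups in question are ($p$-group)-by-(cyclic prime-to-$p$), i.e.\ $P \rtimes C$ with $P$ normal, rather than ``cyclic-by-$p$''; your order bound $|I| \le |P|\,|C|$ is nonetheless the correct one for that structure.
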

\noindent{\it Proof.}
Fix a point $\sigma \in \Delta$, and choose an integer $n \ge {\rm max}(5,p)$ such that $A_n$ is not a quotient of $G$ and the largest prime-to-$p$ factor $m$ of $|A_n|$ is at least $t$.  The group $A_n$ is simple, so every $A_n$-Galois cover of $X$ is linearly disjoint from $Y$ over $X$, having no common subcovers.  

Since $A_n$ is simple of order divisible by $p$, it is a quasi-$p$ group.  So by Abhyankar's Conjecture \cite[Theorem~6.2]{Ha94} (or by \cite[Corollaire~2.2.2]{Ra}, in the case of the affine line), there is an $A_n$-Galois cover $\pi:X_0 \to X$ that is branched only at $\sigma \in X$.  Moreover, this cover may be chosen so that the inertia groups over $\sigma$ are the Sylow $p$-subgroups of $A_n$, by~\cite[Theorem~B]{Po} (see also \cite[Corollary~4.2]{Ha03}).  The number of ramification points over $\sigma$ is the index of such a Sylow $p$-subgroup $Q$.  Since the index $m$ of $Q$ in $A_n$ is greater than or equal to $t$, the set $\pi^{-1}(\sigma)$ contains at least $t$ points; and hence so does $\pi^{-1}(\Delta)$, which contains it.  
\qed

\begin{prop}\label{spts}
Let $X$ be a smooth connected projective $k$-curve, let $\Sigma \subset X$ be a non-empty finite subset of $X$, and fix a point $\tau$ in $\Sigma$.   
Let $G$ be a finite group and let $\phi: Y \to X$ be a $G$-Galois cover that is \'etale away from $\Sigma$.
Then for every positive integer $s$ there is a cover $\psi_s:X_s \to X$ that is \'etale away from $\Sigma$ and is linearly disjoint from $\phi:Y \to X$ such that the normalized pullback $\phi_s:Y_s \to X_s$  of $\phi$ via $\psi_s$ is a $G$-Galois cover with at least $s$ unbranched points in $\psi_s^{-1}(\tau) \subset X_s$.
\end{prop}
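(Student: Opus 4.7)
My starting observation would be the following local criterion: the normalized pullback of $\phi$ via a cover $\psi: X' \to X$ is unramified at a point $x' \in \psi^{-1}(\tau)$ if and only if the local extension $\hat{\mathcal{O}}_{X', x'} / \hat{\mathcal{O}}_{X,\tau}$ contains an isomorphic copy of the local extension $\hat{\mathcal{O}}_{Y,y}/\hat{\mathcal{O}}_{X,\tau}$ for some $y \in \phi^{-1}(\tau)$. So the task reduces to constructing a cover $\psi_s: X_s \to X$, branched only over $\Sigma$ and linearly disjoint from $\phi$, whose local extension at each of at least $s$ preimages of $\tau$ absorbs the local extension of $\phi$ above $\tau$.

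I would construct $\psi_s$ as a composition $\psi_s = \pi_0 \circ \mu: X_s \to X_0 \to X$ in two stages. For the first stage, apply Lemma~\ref{sizeSigma} to $\phi$ with $t \geq s$ to produce a Galois cover $\pi_0: X_0 \to X$ branched only over $\Sigma$, linearly disjoint from $\phi$, with at least $s$ points $\tau'_1, \ldots, \tau'_s \in \pi_0^{-1}(\tau)$. Form the normalized pullback $\phi_0: Y_0 \to X_0$, which is a $G$-Galois cover of $X_0$. For the second stage, a formal patching argument (in the spirit of \cite{Ha99} and the proof of Abhyankar's conjecture via \cite{Po}) produces a cover $\mu: X_s \to X_0$ which is branched only over $\pi_0^{-1}(\Sigma)$, linearly disjoint from $\phi_0$ over $X_0$, and whose local extension at some preimage of each $\tau'_i$ contains the local extension of $\phi_0: Y_0 \to X_0$ at $\tau'_i$.

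The composition $\psi_s = \pi_0 \circ \mu$ is then branched only over $\Sigma$. A standard tower-of-fields computation shows that linear disjointness of $\pi_0$ from $\phi$, combined with linear disjointness of $\mu$ from $\phi_0$ over $X_0$, forces linear disjointness of $\psi_s$ from $\phi$ over $X$. Hence the normalized pullback $\phi_s: Y_s \to X_s$ is $G$-Galois over $X_s$, and by the local criterion it is unramified at each of the chosen $s$ preimages of the $\tau'_i$, yielding $\geq s$ unbranched points in $\psi_s^{-1}(\tau)$.

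The main obstacle is the second stage: the simultaneous prescription of the local behavior of $\mu$ at the chosen $\tau'_i$ together with global linear disjointness from $\phi_0$ over $X_0$. This rests on the formal/rigid patching techniques underlying Lemma~\ref{sizeSigma} and the main theorem of \cite{Ha99}, and I expect it to be the technical heart of the proof.
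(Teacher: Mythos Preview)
Your local criterion is correct and is indeed what underlies the paper's argument. The gap is that your ``second stage'' is the entire content of the proposition, and you have deferred it to an unspecified formal patching argument. The references you invoke do not directly produce a cover $\mu$ whose completed local ring at prescribed points matches a given (possibly wildly ramified) local extension while also being linearly disjoint from $\phi_0$ and branched only inside $\pi_0^{-1}(\Sigma)$; \cite{Ha99} solves embedding problems with extra branch points, and \cite{Po} controls inertia of quasi-$p$ covers, but neither prescribes a specific wild local extension at a given point together with a linear-disjointness guarantee.

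The paper's proof is organized differently and supplies the missing ingredients explicitly. It first reduces to $s=1$: once a single unbranched preimage $\tau_1$ of $\tau$ exists, a further application of Lemma~\ref{sizeSigma} with $\Delta$ avoiding $\tau_1$ multiplies it into $s$ unbranched preimages. For $s=1$, the inertia at $\tau$ is split into its tame cyclic part and its $p$-part. The tame part is killed by Abhyankar's Lemma via an explicit cyclic cover branched at $\tau$ and at an auxiliary point $\sigma_1$ having the same inertia (such points being arranged in advance via Lemma~\ref{sizeSigma}). The $p$-part is killed using \cite[Proposition~2.7]{Ha80}, which constructs a $P$-Galois cover $W\to X$ totally ramified only at $\tau$ whose completed local ring there is \emph{isomorphic} to that of $Y$; this is precisely the ``local absorption'' you want, but it is a specific theorem, not generic patching. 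In both steps, linear disjointness is certified by comparing ramification at a further auxiliary point with matching inertia---this bookkeeping is essential and is absent from your outline. Your strategy of first producing $s$ preimages and then absorbing ramification at all of them simultaneously is harder, not easier, than the paper's reduction to a single point.
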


\begin{proof} 
Applying Lemma \ref{sizeSigma} to the cover $\phi: Y \to X$ with $t=2$ and $\Delta = \Sigma$, there is a Galois cover $\pi:X_0 \to X$ of degree $d\geq 2$ that is linearly disjoint from $\phi:Y \to X$, is {\'e}tale away from $\Sigma$, and such that $|\pi^{-1}(\Sigma)| \geq 2$.
By linear disjointness, the normalized pullback $Y_0 \to X_0$ of $Y \to X$ is connected, and so it is a $G$-Galois cover.  Replacing $Y \to X$ by 
$Y_0 \to X_0$ and $\Sigma$ by $\pi^{-1}(\Sigma)$, we may assume that $|\Sigma| \geq 2$.

Next we reduce to the case that $s=1$.  Suppose that there exists a cover $\psi_1:X_1 \to X$ {\'e}tale away from $\Sigma$ and linearly disjoint from $\phi:Y \to X$ such that the normalized pullback $\phi_1:Y_1 \to X_1$  of $\phi$ via $\psi_1$ contains at least one point $\tau_1$ in $\psi_1^{-1}(\tau)$ that is unbranched in $\phi_1$.  Necessarily $Y_1$ is connected by linear disjointness.
Let $\Delta = \psi_1^{-1}(\Sigma) - \{\tau_1\}$.  This is non-empty by the assumption that $|\Sigma| \geq 2$.
Applying Lemma \ref{sizeSigma} again, but this time to the cover $\phi_1$ with $t=s$, there is a cover $\pi_s: X_s \to X_1$ such that its degree is at least $s$; it is linearly disjoint from $\phi_1:Y_1 \to X_1$; it is {\'e}tale away from $\Delta$; and $|\pi_s^{-1}(\Delta)| \ge s$.  Consider the normalized pullback $\phi_s:Y_s \to X_s$ of $\phi_1$ via $\pi_s$; this is a connected $G$-Galois cover because $X_s$ and $Y_1$ are linearly disjoint over $X_1$.
Moreover, as $\tau_1 \not\in \Delta$, the morphism $\pi_s$ is unramified over $\tau_1$.  But $\pi_s$ has degree at least $s$, and so $\pi_s^{-1}(\tau_1)$ contains at least $s$ points.  
Each point in $X_s$ lying over $\tau_1$ is unbranched in $\phi_s$, since $\tau_1$ is unbranched in $\phi_1$.  
Let $\psi_s:= \psi_1\circ \pi_s: X_s \to X$.
Thus $\phi_s:Y_s \to X_s$ is the normalized pullback of $\phi:Y \to X$ via $\psi_s$, and $X_s$ is linearly independent of $Y$ over $X$ by connectivity of $Y_s$.
Finally, since $\tau_1$ lies over $\tau$ in $X_1$, we have that $X_s$ contains at least $s$ points in $\psi_s^{-1}(\tau) \supset \pi_s^{-1}(\tau_1)$ that are unbranched in $\phi_s$.  Thus the cover $\psi_s:X_s \to X$ has the desired properties.

It now remains to prove the result for $s=1$.  We consider three cases.

\smallskip

Case 1. {\sl The inertia groups of $Y \to X$ over $\tau$ are non-trivial $p$-groups, and there is point $\sigma \in \Sigma - \{\tau\}$ such that the inertia groups of $\phi:Y \to X$ over $\sigma$ are the same as those over $\tau$. }  

Pick a point $\eta \in Y$ over $\tau$ and let $P \subset G$ be its inertia group.  According to Proposition~2.7 of \cite{Ha80}, there is a $P$-Galois cover $W \to X$ that is unramified away from $\tau$, totally ramified at $\tau$, and such that the $P$-Galois extensions $\hat{\mathcal O}_{W,\omega}$ and $\hat{\mathcal O}_{Y,\eta}$ of $\hat{\mathcal O}_{X,\tau}$ are isomorphic, where $\omega$ is the unique point of $W$ over $\tau$.  (In fact, according to the result cited, the number of such covers $W \to X$ is equal to the order of $H^1(X,P)$, though we do not need this here.)  

Since $Y \to X$ has the same sets of inertia groups over $\tau$ and $\sigma$, whereas $W \to X$ is totally ramified over $\tau$ and unramified over $\sigma$, it follows that these two Galois covers are linearly disjoint (having no common subcovers).  
So the normalized pullback $\phi_W: Y_W \to W$ of $Y \to X$ via $W \to X$ is an irreducible $G$-Galois cover.  
This cover is unramified at $\omega$, since it is trivial over the fraction field of $\hat{\mathcal  O}_{W,\omega}$ (by the above isomorphism).  Let $X_1 = W$, $Y_1 = Y_W$, and $\phi_1$ be the cover $\phi_W$.
Now, $\omega \in W = X_1$ lies over $\tau \in X$.
Thus $\psi_1:X_1 \to X$ is {\'e}tale away from $\Sigma$ and is linearly disjoint from $\phi:Y \to X$, and the normalized pullback $\phi_1:Y_1 \to X_1$  contains at least one point in $\psi_1^{-1}(\tau)$ that is unbranched in $\phi_1$.

\smallskip

Case 2: {\sl  There are distinct points $\sigma_1, \sigma_2 \in \Sigma - \{\tau\}$ such that the inertia groups of $\phi:Y \to X$ over each $\sigma_i$ are the same as those over $\tau$. }  

Pick a point over $\tau$ and let $I$ be its inertia group.  Then $I$ is a semi-direct product of a $p$-group $P$ and a cyclic prime-to-$p$ group $C = \langle \gamma \rangle$.   The other inertia groups over $\tau$ (and hence over $\sigma_1$ and $\sigma_2$) are conjugate to $I$. Let $g$ be the genus of $X$.  By \cite[XIII, Cor.~2.12]{Gr}, the prime-to-$p$ fundamental group $\pi_1'$ of $X - \{\tau,\sigma_1\}$ has generators $a_i,b_i,c_j$ with $1\le i \le g$ and $j=0,1$, subject to the single relation $(\prod_i [a_i,b_i]) c_0c_1=1$.

Take the surjection $\pi_1' \to C$ given by sending each $a_i$ and $b_i$ to $1$, sending $c_0$ to $\gamma$, and sending $c_1$ to $\gamma^{-1}$.  The corresponding $C$-Galois cover $\psi_T: T \to X$ is branched just at $\tau$ and $\sigma_1$, where it is totally ramified.  Each non-trivial subcover $T' \to X$ of $T \to X$ will also be totally ramified at $\tau$ and $\sigma_1$, and unramified at $\sigma_2$.  On the other hand, each non-trivial subcover of $Y \to X$ will be branched at $\sigma_2$ if it is branched at $\tau$ and $\sigma_1$, because those three points have the same sets of inertia groups.  So $T$ is linearly disjoint from $Y$ over $X$.  Hence the normalized pullback $\phi_T:Y_T \to T$ of $Y \to X$ via $T \to X$ is an irreducible $G$-Galois cover.  Moreover at the unique points $\tau', \sigma_1' \in T$ over $\tau,\sigma_1 \in X$, the inertia groups of $Y_T \to T$ have no tame part, by Abhyankar's Lemma.  That is, the inertia groups over those points are all isomorphic to $P$.  

If $P$ is trivial then the cover $\phi_T:Y_T \to T$ is unbranched over at least one point of $T$ lying over $\tau \in X$.  Taking $\psi_s:X_s \to X$ to be $\psi_T: T \to X$, the normalized pullback $\phi_T:Y_T \to T$ then plays the role of $\phi_s:Y_s \to X_s$, and the result is shown in this case.

On the other hand if $P$ is non-trivial, we apply Case 1 to $\phi_T: Y_T \to T$ with $\Sigma$ replaced by $\psi_T^{-1}(\Sigma)$, $\tau$ replaced by $\tau'$ and $\sigma$ replaced by $\sigma_1'$. 
Doing so we obtain a cover $\psi:X_1 \to T$ that is {\'e}tale away from $\phi_T^{-1}(\Sigma)$, linearly disjoint from $\phi_T:Y_T \to T$, and such that the normalized pullback $\phi_1:Y_1 \to X_1$  of $\phi_T$ via $\psi$ contains at least one point $\tau_1$ in $\psi^{-1}(\tau')$ that is unbranched in $\phi_1$.
The cover $\phi_1:Y_1 \to X_1$ is the normalized pullback of $\phi:Y \to X$ via $\psi_1 = \psi_T \circ \psi$ because $\phi_T$ is the normalized pullback of $\phi$ via $\psi_T$ and $\phi_1$ is the normalized pullback of $\phi_T$ via~$\psi$.  
Similarly, $X_1$ is linearly disjoint from $Y$ over $X$.
Since $\psi_1(\tau_1) = \psi_T \circ \psi(\tau_1) = \psi_T(\tau') = \tau$,
the cover $\psi_1$ has the asserted properties.
 
\smallskip

Case 3: {\sl The general case when $s=1$.}

Applying Lemma \ref{sizeSigma} to the cover $\phi:Y \to X$ with $t=3$ and $\Delta = \{\tau\}$, there exists a cover $\pi:Z \to X$ of degree at least $3$ that is linearly disjoint from $\phi:Y \to X$, is {\'e}tale away from $\Delta$, and such that $|\pi^{-1}(\Delta)| \ge 3$.  Let $Y_Z$ be the normalization of $Y \times_X Z$; this is irreducible because of linear disjointness.  
Hence the induced map $\phi_Z: Y_Z \to Z$ is $G$-Galois.  Moreover, for any two points $\tau_1, \tau_2 \in \pi^{-1}(\tau)$, the sets of inertia groups of $\phi_Z$ over $\tau_1$ and over $\tau_2$ are the same since $\pi$ is Galois.  The result follows by applying Case 2 to the cover $Y_Z \to Z$ with $\Sigma$ replaced by $\pi^{-1}(\Sigma)$, and $\tau, \sigma_1$, and $\sigma_2$ replaced by any three points in $\pi^{-1}(\tau)$. 
\end{proof}

\section{Solving embedding problems with restricted branch locus.}  \label{solve}
The goal of this section is to prove Theorem~\ref{solveEP}, which is
a strengthening of the main theorem of \cite{Ha99}
(Theorem~5.4 of that manuscript) concerning embedding problems for 
branched covers over an algebraically closed field.  That result gave a
bound on the number of additional branch points that are needed in order
to solve a given embedding problem.  In the version we prove below, we 
also show that the positions of the additional branch points can be
specified in advance (unlike \cite[Theorem~5.4]{Ha99}, which allowed
only one of these additional points to be specified).

In order to prove this strengthening, we will build upon the assertion of Theorem~5.4 of \cite{Ha99}, proceeding in two steps. 
First, we allow the additional branch points to move, obtaining a family 
of solutions to the given embedding problem each with the same number of
additional branch points.  Second, we show that the
additional branch points can be specialized to the desired locations,
while preserving the irreducibility and separability of the cover.  The 
assertions needed to carry out these two steps appear in the next two
propositions.

As before, $k$ is an algebraically closed field of characteristic $p>0$.

\medskip

\begin{prop}\label{movepts1}
Let $G$ be a finite group and let
$f:Y \to X$ be a $G$-Galois cover of 
projective $k$-curves with branch locus $B$.  Write $B$ as
a disjoint union $B_0 \cup B_1$ where $B_0 = \{\xi_1,\dots,\xi_r\}$ is a
set of $r > 0$ distinct $k$-points.

\begin{enumerate}

\item[(a)] Then there is an integral affine $k$-variety $T$, a dominating morphism $\pi 
= (\pi_1,\dots,\pi_r):T \to X^r$, and a $G$-Galois cover
$\tilde f:\tilde Y \to X \times T$, such that the fibre over each $\tau
\in T$ is a $G$-Galois cover of $X$ with branch locus
$B_\tau \cup B_1$, where $B_\tau := \{\pi_1(\tau),\dots,\pi_r(\tau)\}$
is disjoint from $B_1$; and such that for some $\tau_0 \in
\pi^{-1}(\xi_1,\dots,\xi_r)$, the fibre over $\tau_0$ is isomorphic to
the given $G$-Galois branched cover $Y \to X$. 

\item[(b)]Moreover, if $H$ is a quotient of $G$ and $Y \to X$ dominates an
$H$-Galois cover $g:Z \to X$ that is unramified over $B_0$,
then the family $\tilde f:\tilde Y \to X \times T$ may be chosen to
dominate the trivial family $\tilde g := g \times {\rm id}:Z \times T
\to X \times T$.
\end{enumerate}
\end{prop}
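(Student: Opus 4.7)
The strategy is to construct $\tilde f$ by $G$-equivariant formal patching, in the spirit of \cite{Ha80}, so that as $\tau$ varies in $T$ the $r$ branch points in $B_0$ move along prescribed sections of $X \times T \to T$, while the ramification over $B_1$ and the structure of $Y$ away from $B_0$ remain unchanged.

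Let $V \subset X^r$ be the open subvariety parameterizing ordered $r$-tuples of distinct points in $X \setminus B_1$, and take $T$ to be an affine open neighborhood of $\tau_0 := (\xi_1,\dots,\xi_r)$ in $V$; the inclusion $\pi = (\pi_1,\dots,\pi_r): T \hookrightarrow X^r$ is dominating, and each $\pi_i$ is the composition of $\pi$ with the $i$-th projection. On the relative curve $X_T := X \times T$, form the sections $s_i: T \to X_T$, $s_i(\tau) = (\pi_i(\tau),\tau)$, together with the horizontal divisors $B_1 \times T$. I would build $\tilde Y$ by patching two pieces: over the formal neighborhood of each $s_i(T)$ in $X_T$, the constant deformation of the completion $\hat Y_i := Y \times_X \operatorname{Spec}\hat{\mathcal O}_{X,\xi_i}$ along the section $s_i$; and on the complement $(X \setminus B_0) \times T \subset X_T$, the trivial family $(Y|_{X \setminus B_0}) \times T$. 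These agree on the overlap (both restrict to the pullback of $Y$ over a punctured formal neighborhood of each $\xi_i$), so formal patching yields a $G$-Galois cover $\tilde f : \tilde Y \to X_T$ with fibre $Y \to X$ over $\tau_0$ and with fibre branched exactly at $\{\pi_i(\tau)\} \cup B_1$ over each $\tau \in T$. For part (b), since $g: Z \to X$ is \'etale over $B_0$, each local cover $\hat Y_i$ factors through the \'etale $H$-Galois cover $\hat Z_i$ of $\operatorname{Spec}\hat{\mathcal O}_{X,\xi_i}$, so carrying out the same patching over $Z \times T$ rather than $X \times T$ produces a $\tilde f$ dominating $\tilde g := g \times \operatorname{id}$.

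The main obstacle is confirming the global properties of the patched object: that $\tilde Y$ is integral (so that $\tilde f$ is a genuine $G$-Galois cover), that every fibre over $T$ is connected, and that the branch divisor on each fibre is exactly as claimed. Integrality and fibrewise connectedness should propagate from the special fibre $Y$ via flatness of the family together with upper semicontinuity of the number of connected components, while the branch divisor structure is read off directly from the pieces being patched: \'etale away from the moving sections $s_i(T)$ and $B_1 \times T$, and with the prescribed inertia (the same as for $\hat Y_i$ at $\xi_i$) near each $s_i$.
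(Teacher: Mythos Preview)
Your patching set-up has a genuine gap.  You propose to glue the trivial family $(Y|_{X\setminus B_0})\times T$ on $(X\setminus B_0)\times T$ with a ``constant deformation of $\hat Y_i$'' placed along the formal neighbourhood of the moving section $s_i(T)$.  But for $\tau\ne\tau_0$ the point $s_i(\tau)=(\pi_i(\tau),\tau)$ lies in $(X\setminus B_0)\times T$, so generically in $\tau$ the section $s_i(T)$ sits inside your open piece.  There the trivial family is \'etale (since $Y\to X$ is \'etale off $B$), while the local piece you put near $s_i$ is ramified along $s_i(T)$; the two do not agree on the overlap.  Equivalently, your pieces do not cover $X\times T$: the locus $\{\xi_i\}\times T$ lies outside $(X\setminus B_0)\times T$, and once $\tau$ moves away from $\tau_0$ it also lies outside the formal neighbourhood of $s_i(T)$.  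The claim that the overlap is ``a punctured formal neighbourhood of each $\xi_i$'' is only correct over the single point $\tau_0$.

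The paper circumvents this by first constructing the family not over an open $T\subset X^r$ but over the complete local ring $R^*=k[[t_1,\dots,t_r]]=\hat{\mathcal O}_{X^r,\underline\xi}$.  Over $R^*$ the moving branch locus $x_i=t_i$ remains in the formal disc around $\xi_i$: in $\hat{\mathcal K}_{X,x_i}[[t_1,\dots,t_r]]$ the element $x_i-t_i=x_i(1-t_i x_i^{-1})$ is a unit, so the deformed local cover becomes \'etale there and, since \'etale $G$-algebras over this $(t)$-adically complete ring are determined by their reduction mod $(t)$, it can be identified with the trivially deformed cover coming from $U=X\setminus B_0$.  Formal patching over $R^*$ (via \cite[Theorem~3.2.8]{Ha03}) then yields $Y^*\to X\times_k R^*$.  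Only afterwards does one spread out to a finite-type integral affine $T$ mapping dominantly (not by an open immersion) to $X^r$, shrinking $T$ so that smoothness, the description of the branch divisor, and the identification of the fibre over $\tau_0$ all descend; Zariski's Connectedness Theorem then gives connectedness of every fibre.  Your outline skips this complete-local step, and without it there is no candidate \'etale cover on the complement of the moving sections to patch against.
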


\begin{proof} At each point $\xi_i$ of $X$, choose a
uniformizing parameter $x_i \in {\mathcal O}_{X,\xi_i}$.  Identify the local 
ring of $X \times X$ at $(\xi_i,\xi_i)$ with the tensor square of ${\mathcal
O}_{X,\xi_i}$ over $k$.  For short, we write $x_i, t_i \in {\mathcal O}_{X
\times X,(\xi_i,\xi_i)}$ for the elements identified with $x_i \otimes 
1, 1 \otimes x_i$ respectively, and we also identify these elements with
their images in the complete local ring
$\hat{\mathcal O}_{X \times X,(\xi_i,\xi_i)}$.  Consider the inclusion of
complete local rings $\iota_i:\hat{\mathcal O}_{X,\xi_i} \hookrightarrow 
\hat{\mathcal O}_{X \times X,(\xi_i,\xi_i)}$ given by $x_i \mapsto x_i-t_i$.
 With respect to $\iota_i$, for each point $\eta \in Y$ lying over
$\xi_i$ we may form the tensor product $A_\eta := \hat{\mathcal O}_{Y,\eta} 
\otimes_{\hat{\mathcal O}_{X,\xi_i}} \hat{\mathcal O}_{X \times
X,(\xi_i,\xi_i)}$; here we regard $\hat{\mathcal O}_{Y,\eta}$ as an
$\hat{\mathcal O}_{X,\xi_i}$-algebra via $f$.  Then $A_i :=
\prod_{f(\eta)=\xi_i} A_\eta$ is a $G$-Galois algebra over $\hat{\mathcal 
O}_{X \times X,(\xi_i,\xi_i)}$ whose spectrum is ramified precisely over
$x_i=t_i$ and whose fibre mod $t_i$ is isomorphic to the pullback of $Y
\to X$ over $\hat{\mathcal O}_{X,\xi_i}$ as a $G$-Galois cover. 
(Here $A_i$ is the induced algebra $\Ind_{G_\eta}^G A_\eta$, where $G_\eta$ is the inertia group at $\eta$.)

Let $R^* = k[[t_1,\dots,t_r]]$, which we identify with the complete
local ring of $X^r$ at the point ${\underline \xi} :=
(\xi_1,\dots,\xi_r) \in X^r$, and whose fraction field will be denoted
by $F^*$.  For each $i$ write $R_i^* = \hat{\mathcal O}_{X \times 
X^r,(\xi_i,{\underline \xi})}$ and let
$A_i^* = A_i \otimes_{\hat{\mathcal O}_{X \times X,(\xi_i,\xi_i)}} R_i^*$.
Since $r > 0$, the curve $U = X - B_0$ is affine and may be written as
$U = {\rm Spec}\, R_U$.  Let $A_U$ be the ring of functions on 
$f^{-1}(U) \subset Y$, and let $A_U^* = A_U \otimes_{R_U} R_U^*$, where
$R_U^* = R_U[[t_1,\dots,t_r]]$.  The mod $(t_1,\dots,t_r)$-reductions of
$A_U^*$ and $A_i^*$ can be identified with $A_U$ and $\hat{\mathcal
O}_{X,x_i}$; and so we obtain isomorphisms between the $G$-Galois
algebras that these reductions induce over $\hat{\mathcal K}_{X,x_i}$, the
fraction field of $\hat{\mathcal O}_{X,x_i}$.  Writing $\hat{\mathcal
K}^*_{X,x_i} = \hat{\mathcal K}_{X,x_i}[[t_1,\dots,t_r]]$, we thus obtain an 
identification $A_U^* \otimes_{R_U^*} \hat{\mathcal K}_{X,x_i}^* = A_i^*
\otimes_{R_i^*} \hat{\mathcal K}_{X,x_i}^*$ of $G$-Galois \'etale algebras,
for each $i$.

Applying the patching result Theorem~3.2.8 of \cite{Ha03}, we obtain a 
$G$-Galois branched cover $Y^* \to X^* := X \times_k R^*$ whose
restrictions to ${\rm Spec}\, R_U^*$ and to ${\rm Spec}\, R_i^*$
respectively agree with ${\rm Spec}\, A_U^*$ and ${\rm Spec}\, A_i^*$,
and whose closed fibre is isomorphic to $Y \to X$.  The cover is 
branched at the points of $B_1^* := B_1 \times_k R^*$ and at $r$
distinct $R^*$-points $\xi_1^*,\dots,\xi_r^*$.  Here $\xi_i^*$ is the
pullback to $X^* = X \times_k {\mathcal O}_{X^r,\underline \xi}$ of the
$X^r$-point $\xi_i^\Delta$ of $X \times X^r$, where $\xi_i^\Delta$ is 
the $X^r$-point at which the first coordinate (in $X$) is equal to
the $i$th entry of the second coordinate.  More formally, if ${\rm
pr}_i:X^r \to X$ is the $i$th projection, then $\xi_i^\Delta$ is the
inverse image of the diagonal $\Delta \subset X \times X$ under ${\rm 
id}_X \times {\rm pr}_i: X \times X^r \to X \times X$.
Also, since the closed fibre $Y$ of $Y^*$ is smooth over $k$, it follows
that $Y^*$ is smooth over $R^*$, because the singular locus is closed
and every closed subset of $Y^*$ must meet the closed fibre $Y$ since $Y$ is projective.  
Note that in the situation of part (b), $Y^* \to X^*$ dominates the
$H$-Galois cover $g^* = g \times {\rm id}_{{\rm Spec}\,R^*}:Z^* := Z
\times_k R^* \to X^*$, since it does over $R_U^*$, $R_i^*$, and
$\hat{\mathcal K}_{X,x_i}^*$, compatibly, and since the patching assertion 
of \cite[Theorem~3.2.8]{Ha03} is an equivalence of categories.

Let $X' = {\rm Spec}\, S$ be an affine open neighborhood of
$\{\xi_1,\dots,\xi_r\}$ in $X$.  We may regard $S^{\otimes r}$ as a
subring of $R^*$.  The $G$-Galois cover $Y^* \to X^*$, being of finite 
type over $R^*$, is induced by a $G$-Galois cover $\tilde Y \to X \times
T$, where $T$ is the spectrum of an $S^{\otimes r}$-algebra $\tilde R$
that is of finite type and is contained in $R^*$.  Thus $\tilde R$ is a 
domain and $T$ is integral.  The inclusion
$S^{\otimes r} \hookrightarrow \tilde R$ corresponds to a morphism $T \to X'^r$; composing this with the embedding $X'^r \hookrightarrow X^r$ yields a morphism $\pi:T \to X^r$.

Possibly after enlarging the choice of
$\tilde R$ by inverting some elements, we may assume that the above
properties of $Y^* \to X^*$ descend to $\tilde Y \to X \times T$.
Specifically, since $Y^*$ is smooth over $R^*$, we may choose $\tilde Y$ 
so as to be smooth over $T$.  Similarly, we may choose $\tilde Y \to X
\times T$ so that its fibre over $X \times \{\tau_0\}$ is isomorphic to
$Y \to X$, where $\tau_0 \in T$ is the image of the closed point of
${\rm Spec}\, R^*$ (i.e.\ $\tau_0$ corresponds to the contraction of the
maximal ideal of $R^*$ under the inclusion $\tilde R \hookrightarrow
R^*$).  Moreover we may choose $\tilde Y \to X \times T$ so that its
branch locus consists of $\tilde B_1 := B_1 \times T$ together with $r$ 
distinct $T$-points $\tilde\xi_i$, where $\tilde\xi_i: T \to X \times T$
is the pullback to $X \times T$, via $\pi:T \to X^r$, of the $X^r$-point
$\xi_i^\Delta$ of $X \times X^r$ defined above.  In addition we may assume
that the loci $\tilde \xi_i$ are pairwise disjoint and are disjoint from 
$\tilde B_1$.
Finally, in part (b), we may assume that $\tilde Y \to X \times T$
dominates the $H$-Galois cover $\tilde g = g \times {\rm id}_T:Z \times
T \to X \times T$.

The morphism $\pi$ is dominating since $S^{\otimes r} \hookrightarrow 
\tilde R$ is an inclusion.
Write $\pi_i = {\rm pr}_i \circ \pi:T \to X$; so
$\pi=(\pi_1,\dots,\pi_r):T \to X^r$.  Let $p:X \times X^r \to X$ denote 
projection onto the first factor~$X$.  Then $p \circ \xi_i^\Delta = {\rm
pr}_i:X^r \to X$.  Also $\xi_i^\Delta \circ \pi = ({\rm id}_X \times
\pi) \circ \tilde \xi_i: T \to X \times X^r$, since $\tilde \xi_i$ is
the pullback of $\xi_i^\Delta$.  Thus $\pi_i = {\rm pr}_i \circ \pi = p
\circ ({\rm id}_X \times \pi) \circ \tilde \xi_i:T \to X$.  So for each
$\tau \in T$, the branch locus of the corresponding fibre of $\tilde Y
\to X \times T$ is branched at $B_1$ and at the $r$ points $\pi_i(\tau)$ 
of $X$ which are respectively the restrictions of the $T$-points $\tilde
\xi_i$ of $X \times T$.  Since $\tilde\xi_i$ pulls back to $\xi_i^*$
under ${\rm Spec}\,R^* \to T$, and since $\xi_i^*$ restricts to $\xi_i$ 
on the closed fibre of $X^* = X \times_k R^*$, we have that
$\pi_i(\tau_0) = \xi_i$.  Finally, the fibres of $\tilde Y \to X \times
T \to T$ are connected by Zariski's Connectedness Theorem (see \cite[III,
Exercise~11.4]{Ht} or \cite[Th\'eor\`eme~4.3.1]{EGA}) since the morphism is
projective and the fibre over $\tau_0$ is the connected curve $Y$.
\end{proof}

\begin{prop}\label{movepts2} 
In the situation of Proposition \ref{movepts1}(b), 
assume that $N := {\rm ker}(G \to H)$ has order prime to $p$.  Let $B' =
\{\xi_1',\dots,\xi_r'\}$ be a set of $r$ distinct $k$-points of $X$ that
is disjoint from $B_1$.  Then there is a $G$-Galois 
cover $f':Y' \to X$ that has branch locus $B' \cup B_1$ and
dominates $g:Z \to X$.
\end{prop}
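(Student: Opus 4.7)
The plan is to apply Proposition~\ref{movepts1}(b) to produce a family of $G$-Galois covers in which the $r$ branch points of $B_0$ move over $X^r$, and then to specialize the family so that these moving points land at the prescribed configuration $B' = \{\xi_1',\dots,\xi_r'\}$. The hypothesis that $|N|$ is prime to $p$ is what will allow me to control this specialization.

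First, I apply Proposition~\ref{movepts1}(b) to the given cover $f : Y \to X$, obtaining an integral affine $k$-variety $T$, a dominating morphism $\pi = (\pi_1,\dots,\pi_r) : T \to X^r$, and a $G$-Galois cover $\tilde f : \tilde Y \to X \times T$ dominating $\tilde g : Z \times T \to X \times T$, whose fibre over a distinguished point $\tau_0 \in T$ is the original cover $f : Y \to X$. Each fibre at $\tau \in T$ is a $G$-Galois cover of $X$ with branch locus $\{\pi_1(\tau),\dots,\pi_r(\tau)\} \cup B_1$.

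Next, I pass to a one-parameter subfamily linking the two configurations. Let $W \subset X^r$ denote the open locus of $r$-tuples of distinct points of $X \setminus B_1$; both $(\xi_1,\dots,\xi_r)$ and $B'$ lie in $W$ by hypothesis. Since $\pi(T)$ is constructible and dense in $X^r$, it contains a dense open subset $U$, and a dimension count lets me choose an irreducible curve $C \subset W$ passing through both configurations and meeting $U$. I take an irreducible component $T' \subset \pi^{-1}(C)$ dominating $C$, normalize, and compactify to obtain a smooth projective curve $\tilde T'$ mapping finitely and surjectively to the smooth completion $\tilde C$ of $C$. Picking $\tilde\tau' \in \tilde T'$ lying over $B' \in \tilde C$, the candidate cover $f': Y' \to X$ will be the fibre of (an extension over $\tilde T'$ of) the pullback family at $\tilde\tau'$.

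The main obstacle is to verify that this specialized fibre is in fact a connected $G$-Galois cover of $X$ with branch locus exactly $B' \cup B_1$, rather than having degenerated. Here the tameness hypothesis is essential: the subcover $\tilde Y \to Z \times T$ is $N$-Galois, and its inertia groups along the moving branch loci $\tilde\xi_i$ lie in $N$ (because $\tilde Y$ dominates $\tilde g$, which is unramified over those loci), hence are tame since $|N|$ is prime to $p$. By standard specialization results for tame covers (as in SGA~1, Exp.~XIII), a tame $N$-Galois family with prescribed horizontal branch divisors extends across the base with fibres remaining connected $N$-Galois covers of the appropriate topological type. Applying this after base change to $\tilde T'$, specializing at $\tilde\tau'$, and composing with $g : Z \to X$ yields the desired $G$-Galois cover $f' : Y' \to X$ with branch locus $B' \cup B_1$ and dominating $g$.
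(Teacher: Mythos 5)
Your overall strategy is the same as the paper's: invoke Proposition~\ref{movepts1}(b) to get the family $\tilde f:\tilde Y\to X\times T$ with $\pi:T\to X^r$ dominating, restrict to a curve in (a compactification of) $T$ whose image in $X^r$ reaches the configuration $B'$, and use the prime-to-$p$ hypothesis on $N$ to control the specialization at the boundary point over $B'$. The only real divergence is in the last step, and that is exactly where the paper does its work, so the comparison is worth spelling out. Your appeal to ``standard specialization results for tame covers (SGA~1, Exp.~XIII)'' is not quite a citation of a theorem that does the job as stated: a tame $N$-Galois cover of the generic fibre of $Z\times\tilde T'\to\tilde T'$ does \emph{not} in general extend across the base --- the normalization of $Z\times\tilde T'$ in it can acquire ramification along the vertical fibre over $\tilde\tau'$ (think of $y^n=tx$ over $\mathbb{G}_m\times\mathrm{Spec}\,k[[t]]$). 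The extension exists only after a tamely ramified base change of the base curve, killed by Abhyankar's Lemma; this is precisely the paper's cyclic cover $\bar C'\to\bar C$ of degree $m$ prime to $p$ (here is where $|N|$ prime to $p$ is used), followed by Purity of Branch Locus to see that the branch locus is then only the horizontal divisors. Since base-changing the base curve does not affect which covers of $X$ occur as fibres, this is harmless for your conclusion, but it must be done. Two further points that your write-up compresses: (i) producing an $N$-Galois cover of $Z$ at the special fibre and ``composing with $g$'' does not by itself give a $G$-Galois cover of $X$ --- one needs the $G$-action on the total family (equivalently, the $G$-Galois structure of $\tilde Y_{\bar C'}\to X\times\bar C'$) to persist through normalization and restrict to the fibre, which is why the paper carries the $G$-cover of $X\times\bar C'$ throughout and uses the $N$-cover of $Z\times\bar C'$ only to control ramification; and (ii) connectivity and smoothness of the special fibre still need an argument --- the paper gets smoothness from the branch divisor being a disjoint union of sections near $\tau'$ plus the Kummer local structure of tame covers, and connectivity from Zariski's Connectedness Theorem applied to the projective family, using that the nearby fibres (which come from $T$) are connected by Proposition~\ref{movepts1}. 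None of these gaps is fatal --- each is filled by standard tools, and indeed the SGA~1 specialization theory you cite is proved by the same mechanism --- but as written your key sentence asserts more than the cited results give, and the $G$-versus-$N$ issue in particular should not be left implicit.
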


\begin{proof}  Let $\tilde f:\tilde Y \to X \times T$ be as in
the conclusion of Proposition~\ref{movepts1}(b).  If $\underline\xi' := 
(\xi_1',\dots,\xi_r')$ is in the image of the dominating morphism $\pi:T
\to X^r$, then the conclusion follows from the properties of $\tilde f$
and its fibres.  More generally, we proceed as follows:

Since $T$ is an affine $k$-variety, we may regard $T$ as a closed subset of some $\A^n_k$.  The graph $\Gamma 
\subset T \times X^r \subset \A^n_k \times X^r$ of $\pi$ is isomorphic to
$T$ by the first projection map; so replacing $T$ by $\Gamma$ we may
assume that $T \subset \A^n_k \times X^r$ and that $\pi$ is the second
projection.  Let $\bar T$ be the closure of $T$ in $\P^n_k \times X^r$
and let $\bar \pi:\bar T \to X^r$ be the second projection map.  Since
the projective morphism $\bar \pi$ extends $\pi:T \to X^r$, it is
dominating and hence surjective.  Let $\tau_1 \in \bar T$ be a point 
that lies over $\underline \xi' \in X^r$.  Since $T$ is integral, there is an integral curve $\bar C_0$  in $\bar T$ that passes through $\tau_0$ and $\tau_1$.  Let $C_0 = \bar C_0 \cap T$.  Thus $\bar C_0$ is the closure of $C_0$ in $\bar T$.

The pullback of $\tilde Y \to Z \times T \to X \times T$ under $\bar C_0
\to T$ is a $G$-Galois branched cover $\tilde Y_{\bar C_0} \to Z \times
\bar C_0 \to X \times \bar C_0$ whose restriction to $X \times C_0$ is 
branched only over $B_1 \times C_0$ and at the $C_0$-points
$\xi_{i,C_0}:=(\pi_i,{\rm id}_{C_0}):C_0 \to X \times C_0$ for
$i=1,\dots,r$ (where, as before, $\pi_i:X^r \to X$ is the $i$th
projection).  Let $\bar C$ be the normalization of the curve $\bar C_0$, 
and let $\tilde Y_{\bar C}  \to Z \times \bar C \to X \times \bar C$ be
the normalized pullback of $\tilde Y_{\bar C_0} \to Z \times \bar C_0
\to X \times \bar C_0$ under $\bar C \to \bar C_0$.  So $\tilde Y_{\bar 
C}  \to X \times \bar C$ is unramified away from $B_1 \times \bar C$,
the $\bar C$-points $\xi_{i,\bar C} := (\pi_i,{\rm id}_{\bar C})$, and
$X \times S$, where $S \subset \bar C$ is the finite set of points whose 
image in $\bar C_0$ does not lie in $C_0$.  (Here $\pi_i:\bar C \to X$
denotes the composition of the normalization map $\bar C \to \bar C_0$
with the projection map $\pi_i:\bar C_0 \to X$.)  Hence the $N$-Galois 
cover $\tilde Y_{\bar C}  \to Z \times \bar C$ is unramified away from
$g^{-1}(B_1) \times \bar C$, the pullbacks $(g \times {\rm id}_{\bar
C})^*(\xi_{i,\bar C})$ to $Z \times \bar C$ for $i=1,\dots,r$, and $Z
\times S$.

Let $m$ be the least common multiple of the ramification indices of
$\tilde Y_{\bar C}  \to Z \times \bar C$ over the generic points of $Z
\times S$.  Thus $m$ is prime to $p$, since the order of $N$ is prime to 
$p$.  Choose a cyclic branched cover $\bar C' \to \bar C$ of degree $m$ that is
totally ramified at the points of $S$ (and possibly elsewhere).  Then by
Abhyankar's Lemma, the normalized pullback $\tilde Y_{\bar C'}  \to Z 
\times \bar C'$ of $\tilde Y_{\bar C}  \to Z \times \bar C$ is
unramified at the generic points of $Z \times S'$, where $S' \subset
\bar C'$ is the inverse image of $S \subset \bar C$.  So by Purity of 
Branch Locus \cite[41.1]{Na}, $\tilde Y_{\bar C'}  \to Z \times \bar C'$ is
branched only over $g^{-1}(B_1) \times \bar C'$ and at the pullbacks
$(g \times {\rm id}_{\bar C'})^*(\xi_{i,\bar C'})$ to $Z \times \bar C'$ 
for $i=1,\dots,r$, where $\xi_{i,\bar C'}$ is the pullback of
$\xi_{i,\bar C}$ under $\bar C' \to \bar C$.

Let $\tau' \in \bar C'$ be a point mapping to $\tau_1 \in \bar C_0$ via
$\bar C' \to \bar C \to \bar C_0$.  The components of $B_1 \times \bar 
C'$ and the $\bar C'$-points $\xi_{i,\bar C'}$ are each smooth over
$\bar C'$, as are their inverse images under $Z \times \bar C' \to X
\times \bar C'$.  In the fibre of $X \times \bar C'$ over $\tau'$, these 
loci specialize to distinct points of $X$ (viz.\ to the points of $B_1$
and $B'$ respectively); hence their inverse images in $Z \times \bar C'$
are also disjoint over $\tau'$.  So the union of these inverse images is 
smooth over $\tau'$, and there is an affine neighborhood
$C' \subset \bar C'$ of $\tau'$ such that the (tame) branch locus of the
restriction $\tilde Y_{C'} \to Z \times C'$ of $\tilde Y_{\bar C'}  \to 
Z \times \bar C'$ is smooth over $\bar C'$.  By Abhyankar's Lemma (in
the form \cite[Theorem~2.3.2]{GM}), the cover $\tilde Y_{C'}  \to Z \times
C'$ is a Kummer cover \'etale locally, with smooth branch locus, hence 
is smooth over $C'$.  So the fibre over $\tau'$ is smooth over $k$.
 
The intersection of $C'$ with the inverse image of $C_0$ under $\bar C' \to \bar C_0$ is a dense open subset of $C'$.  For each point $\tau$ in this dense open subset, the fibre of the projective morphism $\tilde Y_{C'} \to C'$ over $\tau$ is connected, by the conclusion of Proposition~\ref{movepts1}.  By Zariski's Connectedness Theorem \cite[III,
Exercise~11.4]{Ht}, it follows that each 
fibre of $\tilde Y_{C'}  \to C'$ is connected, in particular the fibre over $\tau'$.  So the
fibre of the $G$-Galois cover $\tilde Y_{C'} \to X \times C'$ over the 
point $\tau' \in C'$ is a $G$-Galois cover of $X$ with the desired
properties.
\end{proof}

We recall the definition of relative rank of a subgroup of a finite group (see Section 2 of  \cite{Ha99}).
Let $H$ be any finite group  and let $E$ be a subgroup of $H$.
A subset $S \subset E$ will be called a {\it relative generating set} for $E$ in $H$ if for every subset $T \subset H$ such that $E \cup T$  generates $H$, the subset $S \cup T$ also generates $H$.
We define {\it the relative rank of $E$ in $H$} to be the smallest non-negative integer $s:=\rk_H(E)$ such that there is a relative generating set for $E$ in $H$ consisting of $s$ elements.  
Thus every generating set for $E$ is a relative generating set, 
and so $0 \leq \rk_H(E) \leq \rk(E).$
Also, $\rk_H(E) = \rk(E)$ if $E$ is trivial or $E = H$, while $\rk_H(E) = 0$ if and only if $E$ is contained in the Frattini subgroup $\Phi(H)$ of $H$ \cite[p.~122]{Ha99}.

\begin{thm}\label{solveEP}
Let $N$ be a normal subgroup of a finite
group $G$, and let $H = G/N$.  Let $\bar N = N/p(N)$ and $\bar G = 
G/p(N)$, and let $r = \rk_{\bar G}(\bar N)$.   Let $V \to U$ be an
$H$-Galois \'etale cover of affine $k$-curves and let $\xi_1, 
\dots, \xi_r \in U$ be distinct points.
Then there is a $G$-Galois cover $W \to U$ branched only at
$\xi_1, \dots, \xi_r \in U$ such that $W/N \to U$ is isomorphic to $V
\to U$ as an $H$-Galois cover.
\end{thm}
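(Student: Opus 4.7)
The plan is to combine the main theorem of \cite{Ha99} (which supplies a $G$-Galois cover with $r$ auxiliary branch points at positions that are almost uncontrolled) with Propositions~\ref{movepts1} and \ref{movepts2} (which together let one deform and then specialize those auxiliary branch points to prescribed locations). Because Proposition~\ref{movepts2} requires that the kernel be of order prime to $p$, we first reduce the theorem to that case by a standard quasi-$p$ lifting argument.

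For the reduction, set $\bar G = G/p(N)$ and $\bar N = N/p(N)$, so that $\bar N$ has order prime to $p$ and $\rk_{\bar G}(\bar N) = r$. Assuming the theorem for the pair $(\bar G, \bar N)$, we obtain a $\bar G$-Galois cover $\bar W \to U$ branched only at $\{\xi_1, \dots, \xi_r\}$ with $\bar W/\bar N \cong V$ as $H$-Galois covers. Over the affine curve $U' = U - \{\xi_1, \dots, \xi_r\}$ this cover is \'etale, and the induced embedding problem for $\pi_1(U')$ lifting $\bar G$ to $G$ has quasi-$p$ kernel $p(N)$. Such embedding problems over affine $k$-curves admit proper solutions (see e.g.\ \cite{Ha03}), producing a $G$-Galois \'etale cover of $U'$ that extends by normalization to a $G$-Galois cover $W \to U$ branched only at a subset of $\{\xi_1, \dots, \xi_r\}$, with $W/p(N) \cong \bar W$ and hence $W/N \cong V$.

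So it suffices to prove the theorem when $N$ has order prime to $p$. Let $X$ be the smooth projective completion of $U$, let $\Sigma = X - U$, and let $g: Z \to X$ be the smooth projective completion of $V \to U$, an $H$-Galois cover branched at most over $\Sigma$. By \cite[Theorem~5.4]{Ha99} there is a $G$-Galois cover $Y \to X$ branched only over $\Sigma \cup B_0$, where $B_0 = \{\eta_1, \dots, \eta_r\} \subset U$, with $Y/N \cong Z$. Applying Proposition~\ref{movepts1}(b) with this $B_0$ and $B_1 = \Sigma$ yields an integral affine variety $T$, a dominating morphism $\pi: T \to X^r$, and a $G$-Galois family $\tilde Y \to X \times T$ dominating $Z \times T \to X \times T$, whose fibers are $G$-Galois covers of $X$ with the $r$ auxiliary branch points moving as $\tau$ ranges over $T$. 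Proposition~\ref{movepts2}, whose prime-to-$p$ hypothesis is now in force, then specializes the family at $B' = \{\xi_1, \dots, \xi_r\}$ to produce a $G$-Galois cover $Y' \to X$ branched only over $\Sigma \cup \{\xi_1, \dots, \xi_r\}$ and dominating $Z \to X$. Its restriction $W := Y'|_U \to U$ is the desired cover.

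The substantive deformation-theoretic work has already been carried out in Propositions~\ref{movepts1} and \ref{movepts2}; the chief point requiring care here is the reduction step, where one must verify that the quasi-$p$ lift across $\{\xi_1, \dots, \xi_r\}$ has the correct $H$-quotient. This is immediate from $W/p(N) \cong \bar W$ and $\bar W/\bar N \cong V$, combined with the functoriality of normalization.
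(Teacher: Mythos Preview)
Your argument follows essentially the same route as the paper's: invoke \cite[Theorem~5.4]{Ha99} for existence with $r$ uncontrolled auxiliary branch points, use Proposition~\ref{movepts2} (built on Proposition~\ref{movepts1}) to move those points to the prescribed $\xi_i$, and handle the quasi-$p$ kernel $p(N)$ via \cite{Ha03}. The paper organizes the steps in the opposite order---first producing the $\bar G$-cover at the correct branch locus, then lifting through $p(N)$ using \cite[Proposition~4.4]{Ha03}---but the content is identical.

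There is one omission you should address: the case $r=0$. Proposition~\ref{movepts1} explicitly requires $r>0$, so your moving step is unavailable, and the paper does not invoke \cite[Theorem~5.4]{Ha99} in this case either. Instead it argues directly: since $\pi_1(U)$ has cohomological dimension~$1$ it is projective, so the $H$-cover $V\to U$ is dominated by an \'etale $\bar G_0$-cover for some $\bar G_0\subseteq\bar G$; then $\bar G = \bar G_0\bar N$, and $r=\rk_{\bar G}(\bar N)=0$ means $\bar N\subseteq\Phi(\bar G)$, forcing $\bar G_0=\bar G$. You should insert this short argument (or else verify that \cite[Theorem~5.4]{Ha99} already yields the $r=0$ case, in which case the moving step is simply vacuous).
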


\begin{proof} We may identify $\bar G/\bar N$ with $H$.  Let $X$ be
the smooth completion of $U$ and let $Z$ be the normalization of $X$ in
$V$.  Also let $B_1 = X-U$, let $B_0 = \{\xi_1, \dots, \xi_r\}$, and let 
$B = B_0 \cup B_1$.  We claim that there is a $\bar G$-Galois cover
$\bar Y \to X$ that dominates $Z \to X$ and is branched only at $B$; or
equivalently, a $\bar G$-Galois cover $\bar W \to U$ that dominates $V 
\to U$ and is branched only at $B_0$.

First suppose $r=0$.  For some subgroup $\bar G_0 \subset \bar G$, the cover $H$-Galois cover $V \to U$ is dominated by a $\bar G_0$-Galois \'etale cover $\bar W \to U$, because the fundamental group of the affine $k$-curve $U$ has cohomological dimension $1$ [Se90, Proposition~1] and hence is projective (by \cite[I,~\S3.4 Prop.~16 and \S5.9 Prop.~45]{Se73}).  Thus $\bar G/\bar N = H = \bar G_0/(\bar G_0 \cap \bar N) = \bar G_0 \bar N/\bar N$ and so $\bar G$ is generated by $\bar G_0$ and $\bar N$.  But $\bar N$ is contained in the Frattini subgroup of $\bar G$ because $r=0$.  Hence $\bar G_0 = \bar G$, and $\bar W \to U$ is as desired.

On the other hand if $r \ge 1$, then by \cite[Theorem~5.4]{Ha99} there is a smooth connected
$\bar G$-Galois cover $\bar W_0 \to U$ branched at $r$ points such that
$\bar W_0/\bar N \to U$ is isomorphic to $V \to U$ as an $H$-Galois
cover.  Let $\bar Y_0$ be the normalization of $X$ in $\bar W_0$.  So $Z
= \bar Y_0/\bar N$; $Z \to X$ is unramified outside of $B_1 := X - U$;
and $\bar Y_0 \to X$ is unramified outside of $B_1$ and the set $B'$ consisting of the $r$ branch
points of $Y_0 \to U$.
Since $\bar N$ has order prime to $p$, Proposition~\ref{movepts2} above asserts that
there is a smooth connected $\bar G$-Galois branched cover $\bar Y \to
X$ that is branched only at $B$ and that dominates $Z \to X$. 
This completes the proof of the claim.

Since $p(N)$ is a quasi-$p$ group and $B$ is non-empty, Proposition~4.4
of \cite{Ha03} asserts that the $\bar G$-Galois cover $\bar Y \to X$
is dominated by some smooth connected 
$G$-Galois branched cover $Y \to X$ that is also branched only over $B$. 
Since $\bar Y \to X$ dominates the $H$-Galois cover $Z \to X$, so does $Y \to X$; and hence the restriction $W \to U$ of $Y \to X$ over $U$ has the desired properties.
\end{proof}

\section{Main Theorem} \label{application}

Using the results of the previous sections, we prove our main theorem, that the fundamental group of any affine curve in characteristic $p>0$ is almost $\omega$-free.  To do so we combine the ``newly unbranched'' points in the covers obtained in Section 2 with the main result of Section 3 to solve the induced embedding problem. 

\begin{thm} \label{app1}
Let $k$ be an algebraically closed field of characteristic $p$, let $X$ be a smooth connected projective curve, let $\Sigma \subset X$ be a non-empty set of closed points in $X$, and let $C=X - \Sigma$.  Then $\pi_1(C)$ is almost $\omega$-free.
\end{thm}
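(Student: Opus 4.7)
The idea is to find the required open subgroup via Proposition~\ref{spts} and then solve the induced embedding problem via Theorem~\ref{solveEP}. Concretely: given $\mathcal{E} = (\alpha : \pi_1(C) \to G,\, f : \Gamma \to G)$, set $N = \ker(f)$, $\bar N = N/p(N)$, $\bar\Gamma = \Gamma/p(N)$, and $r = \rk_{\bar\Gamma}(\bar N)$. Let $\phi : Y \to X$ be the $G$-Galois cover corresponding to $\alpha$, étale away from $\Sigma$. Fix any $\tau \in \Sigma$ and apply Proposition~\ref{spts} with any integer $s > r$, obtaining a cover $\psi_s : X_s \to X$ étale away from $\Sigma$ and linearly disjoint from $\phi$, such that the normalized pullback $\phi_s : Y_s \to X_s$ is $G$-Galois and admits at least $s$ unbranched points $\xi_1, \dots, \xi_s$ in $\psi_s^{-1}(\tau)$. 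Let $C'_s := X_s - \psi_s^{-1}(\Sigma)$, and take the candidate open subgroup to be $H := \pi_1(C'_s) \subset \pi_1(C)$. Linear disjointness makes $Y_s$ connected, so $\alpha|_H : H \to G$ surjects, yielding the induced problem $\mathcal{E}_H$.

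To solve $\mathcal{E}_H$ properly, let $\Sigma_s \subset \psi_s^{-1}(\Sigma)$ denote the actual branch locus of $\phi_s$ in $X_s$, and set $\Sigma' := \psi_s^{-1}(\Sigma) \setminus \{\xi_1, \dots, \xi_r\}$. Since the $\xi_i$ are unbranched in $\phi_s$ one has $\Sigma_s \subseteq \Sigma'$, and $\Sigma' \neq \emptyset$ because $s > r$. Thus $U := X_s - \Sigma'$ is an affine $k$-curve, $\phi_s^{-1}(U) \to U$ is an étale $G$-Galois cover, and $\xi_1, \dots, \xi_r$ are distinct points of $U$. Apply Theorem~\ref{solveEP} with $N \trianglelefteq \Gamma$ playing the role of their $N \trianglelefteq G$ (so that their quotient $H$ plays the role of our $G$); this produces a $\Gamma$-Galois cover $W \to U$ branched only at $\xi_1, \dots, \xi_r$ such that $W/N \to U$ is isomorphic to $\phi_s^{-1}(U) \to U$ as a $G$-Galois cover. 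The normalization $\widetilde W \to X_s$ is then a connected $\Gamma$-Galois cover with branch locus contained in $\Sigma' \cup \{\xi_1, \dots, \xi_r\} = \psi_s^{-1}(\Sigma)$. Hence $\widetilde W|_{C'_s} \to C'_s$ is an étale $\Gamma$-Galois cover dominating $\phi_s|_{C'_s}$, which gives a surjection $H \twoheadrightarrow \Gamma$ that properly solves $\mathcal{E}_H$.

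The main subtlety—most of the real work being already encapsulated in Proposition~\ref{spts} and Theorem~\ref{solveEP}—lies in matching up their hypotheses. Theorem~\ref{solveEP} requires the $r$ prescribed additional branch points to sit \emph{inside} the affine base $U$ on which an étale $G$-Galois cover is given, while almost $\omega$-freeness demands that the resulting $\Gamma$-Galois cover of $X_s$ be branched \emph{inside} $\psi_s^{-1}(\Sigma)$. The points produced by Proposition~\ref{spts} reconcile these requirements: they lie over $\Sigma$ yet are unramified in $\phi_s$, so they may be safely included in $U$; and choosing $s > r$ ensures that $\psi_s^{-1}(\Sigma)$ contains enough additional points to form a nonempty $\Sigma'$, keeping $U$ affine. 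This bookkeeping is what allows Theorem~\ref{solveEP} to be applied over the pulled-back curve in a way that yields a cover branched only in $\psi_s^{-1}(\Sigma)$.
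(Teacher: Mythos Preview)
Your proof is correct and follows essentially the same strategy as the paper's: apply Proposition~\ref{spts} to obtain a pullback $\psi_s:X_s\to X$ whose normalized pullback $\phi_s:Y_s\to X_s$ is $G$-Galois with enough unbranched points in $\psi_s^{-1}(\tau)$, then invoke Theorem~\ref{solveEP} over the affine curve obtained by deleting the remaining points of $\psi_s^{-1}(\Sigma)$ to produce the desired $\Gamma$-Galois cover. The only notable difference is that you take $s>r$ (rather than $s=r$ as in the paper) to guarantee explicitly that $\Sigma'=\psi_s^{-1}(\Sigma)\setminus\{\xi_1,\dots,\xi_r\}$ is nonempty, so that $U=X_s-\Sigma'$ is affine; this is a harmless extra bit of care, and the paper's version is also fine since the construction in Proposition~\ref{spts} in fact yields $|\psi_s^{-1}(\Sigma)|>s$.
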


\begin{proof}
Explicitly, we wish to show the following.
Let $\mathcal{E} = (\alpha : \pi_1(C) \to G, f: \Gamma \to G)$ be an embedding problem for $\pi_1(C)$.  
Then there exists an open subgroup $H \subset \pi_1(C)$ such that $\alpha_H := \alpha|_H$ is surjective and such that the induced embedding problem $\mathcal{E}_H = (\alpha_H, f)$ has a proper solution.

Let $N  = \ker(f)$, and write $\bar N = N/p(N)$ and $\bar \Gamma = \Gamma/p(N)$.  Let $\tau \in \Sigma$ and let $s= \rk_{\bar \Gamma}(\bar N)$ (see the definition of relative rank just before Theorem~\ref{solveEP}).  Let $\phi:Y \to X$ be the pointed $G$-Galois corresponding to $\alpha$.
By  Proposition \ref{spts} there exists a cover $\psi_s:X_s \to X$ that is \'etale away from $\Sigma$; contains at least $s$ points $\{\tau_1, \tau_2,...,\tau_s \} \subset X_s$ over $\tau \in X$; and whose normalized pullback $\phi_s:Y_s \to X_s$ of $\phi$ over $\psi_s$ is a $G$-Galois cover \'etale away from $\Delta := \psi_s^{-1}(\Sigma) - \{\tau_1, \tau_2,...,\tau_s \}$.
Let $\Sigma_s = \psi_s^{-1}(\Sigma)$.
Applying Theorem \ref{solveEP} to the \'etale loci of $\phi_s:Y_s \to X_s$, there exists a smooth connected $\Gamma$-Galois cover $Z_{s} \to X_{s}$ that is \'etale away from $\Delta \cup \{\tau_1,...,\tau_{s}\} = \Sigma_s$, such that $Z_s/N$ is isomorphic to $Y_s$ as $G$-Galois covers.  Now, $\psi_s: X_s \to X$ is ramified at most over $\Sigma$, and the pullback $\phi_s: Y_s \to X_s$ of $\phi$ via $\psi_s$ is a $G$-Galois cover.  Moreover, the $\Gamma$-Galois cover $Z_s \to X_s$  dominates $Y_s \to X_s$.  Letting $H \subset \pi_1(X -\Sigma)$ and the surjection $\alpha_H: H \to G$ be those that correspond to $\psi_s: X_s \to X$ (after some choice of base point), we find that $\alpha|_H = \alpha_H$ and that the induced embedding problem $\mathcal{E}_H = (\alpha_H, f)$ has a proper solution. 
\end{proof}

The situation in characteristic zero is somewhat different, because fundamental groups of curves in characteristic zero are finitely generated.  Specifically, there is the following result:

\begin{prop}  \label{char0}
Let $X$ be a smooth connected projective curve  of genus $g$ defined over an algebraically closed field $k$ of characteristic $0$ and let $\Sigma$ be a non-empty finite collection of points on $X$. The group $\pi_1(X-\Sigma)$ is almost $\omega$-free if and only if $X-\Sigma$ is not isomorphic to the once or twice punctured projective line.
\end{prop}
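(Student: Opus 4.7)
In characteristic zero, $F := \pi_1(X - \Sigma)$ is free profinite of rank $n = 2g + r - 1$ with $r = |\Sigma|$, by \cite[XIII, Cor.~2.12]{Gr}.  The two excluded curves $\P^1 \setminus \{\text{pt}\}$ and $\P^1 \setminus \{\text{2 pts}\}$ correspond respectively to $n = 0$ (so $F = 1$) and $n = 1$ (so $F \cong \hat\Z$), and every other affine $X - \Sigma$ has $n \ge 2$.  For the ``only if'' direction: if $n = 0$, the only open subgroup of $F$ is $F$ itself, so the embedding problem $(1 \to 1,\ \Z/2\Z \to 1)$ has no proper solution on any open subgroup; if $n = 1$, every open subgroup of $\hat\Z$ is again procyclic, so the embedding problem $(\hat\Z \to 1,\ \Z/2\Z \times \Z/2\Z \to 1)$ has no proper solution on any open subgroup either.

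Assume now $n \ge 2$ and fix an embedding problem $\mathcal{E} = (\alpha : F \to G,\ f : \Gamma \to G)$; set $K := \ker\alpha$ and let $d$ be the minimal number of generators of $\Gamma$.  By the Nielsen--Schreier formula for free profinite groups, every open subgroup $H \le F$ is itself free profinite, of rank $[F:H](n-1) + 1$, which grows without bound in the index since $n \ge 2$.  The plan is twofold: (i) produce an open subgroup $H$ on which $\alpha|_H$ remains surjective and whose rank is at least $d$; (ii) apply Gasch\"utz's Lemma to lift a free basis of $H$ to a generating tuple of $\Gamma$ compatible with $f$, yielding a surjection $H \twoheadrightarrow \Gamma$ that is a proper solution of $\mathcal{E}_H$.

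For step (i), choose $m \ge 5$ large enough that $A_m$ is not a quotient of $G$ (any $m > |G|$ works) and $|A_m|(n-1) + 1 \ge d$, and pick a surjection $\chi : F \twoheadrightarrow A_m$, which exists because $F$ has rank $\ge 2$ and $A_m$ is $2$-generated.  The image $\chi(K)$ is normal in the simple group $A_m$, so it is either trivial or all of $A_m$; the trivial case would factor $\chi$ through $F/K = G$, contradicting the choice of $m$, so $\chi|_K$ is surjective.  Take $H := \ker\chi$: given $g \in G$, lift to $x \in F$ with $\alpha(x) = g$ and correct by $k \in K$ with $\chi(k) = \chi(x)^{-1}$, giving $xk \in H$ with $\alpha(xk) = g$, so $\alpha(H) = G$; and $\rk(H) = |A_m|(n-1)+1 \ge d$ by construction.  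The main obstacle---and essentially the only technical step---is arranging $\alpha|_H$ to remain surjective while simultaneously forcing $\rk(H)$ to be large; the simple-quotient detour handles both conditions at once, since simplicity of $A_m$ couples the surjectivity of $\chi|_K$ (hence of $\alpha|_H$) to the index growth that drives the Nielsen--Schreier rank.
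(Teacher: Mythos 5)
Your argument is correct, but it takes a different route from the paper's. The paper's proof of the forward direction is geometric: it picks a prime $d > |\Gamma|$, constructs a $d$-cyclic cover $Z_d \to X$ branched only over $\Sigma - \{\tau\}$ (so that the $d$ points over $\tau$ become extra punctures), gets linear disjointness from $Y \to X$ simply because $\deg Z_d = d$ is a prime exceeding $\deg Y$, and then bounds the rank of $\Pi_0 = \pi_1(Z_d - \Sigma_d)$ by directly counting the genus and punctures of $Z_d$, before invoking \cite[Proposition~17.7.3]{FJ} exactly as you do with Gasch\"utz. You instead stay entirely inside group theory: the subgroup $H$ is the kernel of a surjection onto a simple group $A_m$ that is not a quotient of $G$, simplicity forces $\chi|_K$ (hence $\alpha|_H$) to be surjective, and the Nielsen--Schreier formula $\rk(H) = [F:H](n-1)+1$ supplies the rank growth. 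Both mechanisms are sound; yours is arguably cleaner in that it never leaves the category of profinite groups, while the paper's is uniform with the rest of its geometric machinery (it reuses the same ``prime-degree linear disjointness'' trick that appears elsewhere). Notably, the paper itself observes, in the remark following Corollary~\ref{fingen}, that Proposition~\ref{char0} can alternatively be deduced from \cite[Theorem 3.6.2(b)]{riza} together with the rank formula $2g+r-1$ --- which is precisely the route you took. Your treatment of the converse direction (trivial group for the once-punctured line, $\hat\Z$ and a non-cyclic $\Gamma$ for the twice-punctured line) matches the paper's in substance. One small presentational point: when you ``apply Gasch\"utz's Lemma to lift a free basis,'' you are implicitly using that a continuous map defined on a free basis of a finite-rank free profinite group extends to a continuous homomorphism and that the resulting lift is surjective because its image contains a generating set of $\Gamma$; this is exactly the content of \cite[Proposition~17.7.3]{FJ}, which would be the cleanest citation.
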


\begin{proof} 
Suppose first that $X - \Sigma$ is not isomorphic to the once or twice punctured projective line.  Thus $g \ge 0$, $r := |\Sigma| \ge 1$, and either $g \ge 1$ or $r \ge 3$; hence $2g+r \ge 3$.   Let $\E = (\alpha:\Pi \to G, f:\Gamma \to G)$ be a finite embedding problem for $\pi_1(X-\Sigma)$.  Choose a prime number $d$ that is greater than the order of $\Gamma$.
The surjection $\alpha: \pi_1(X-\Sigma) \to G$ determines a $G$-Galois cover $Y \to X$ that is \'etale away from $\Sigma$.  Pick $\tau \in \Sigma$ and set $\Sigma' = \Sigma - \{\tau\}$.  Thus $X - \Sigma'$ is isomorphic either to the projective line with at least two points deleted, or else to a dense open subset of a curve of genus at least one.  Hence $\pi_1(X - \Sigma')$ has a quotient that is cyclic of order $d$, corresponding to a $d$-cyclic cover $\phi_d: Z_d \to X$ that is \'etale away from $\Sigma'$.  The unramified fibre $\phi_d^{-1}(\tau)$ contains at least $d$ points, and $\Sigma_d :=\phi_d^{-1}(\Sigma)$ contains at least $r-1+d>0$ points.  Since $Z_d$ has genus at least $g$, the fundamental group $\Pi_0 = \pi_1(Z_d -\Sigma_d) \subset \pi_1(X-\Sigma)$ is free of rank at least $2g + (r-1+d) -1 > d$.  Also, $Z_d \to X$ is linearly disjoint from $Y \to X$ because they have no common subcovers, the degree of $Z_d \to X$ being a prime number greater than the degree of $Y \to X$.  Thus $\alpha|_{\Pi_0}:\Pi_0 \to G$ is surjective.  Since the rank of $\Pi_0$ is larger than the minimal number of generators of $\Gamma$, by~\cite[Proposition~17.7.3]{FJ} there is a proper solution to the induced embedding problem $\E_0 = (\alpha|_{\Pi_0}:\Pi_0 \to G, f:\Gamma \to G)$.  Thus $\pi_1(X - \Sigma)$ is almost $\omega$-free.

Conversely, suppose that $X-\Sigma$ is the once or twice punctured projective line over $k$.  In the former case $X-\Sigma$ has no non-trivial \'etale covers, while in the latter case every finite \'etale cover is itself isomorphic to the twice punctured projective line.  Thus any open subgroup $\Pi_0$ of $\Pi = \pi_1(X-\Sigma)$ is either trivial or has rank one.  So given a finite embedding problem $\E = (\alpha:\Pi \to G, f:\Gamma \to G)$ for $\Pi$ such that $\Gamma$ is not cyclic, there is no proper solution to the induced embedding problem $\E_0$ for any open subset $\Pi_0$.  Thus $\pi_1(X - \Sigma)$ is not almost $\omega$-free.
\end{proof}

This result helps clarify the relationships among the conditions on a profinite group being free, $\omega$-free, and almost $\omega$-free.  For infinitely generated profinite groups, every free group is $\omega$-free (i.e.\ every finite embedding problem has a proper solution) but not conversely, as noted in the introduction.  Also, every infinitely generated $\omega$-free  profinite group is (trivially) almost $\omega$-free; and again the converse fails, since the fundamental group of an affine curve in characteristic $p$ is almost $\omega$-free by Theorem~\ref{app1} but is not $\omega$-free, as noted in the introduction.  In contrast, finitely generated free profinite groups $\Pi$ are not $\omega$-free, since an embedding problem $\E = (\Pi \to G, \Gamma \to G)$ cannot have a proper solution if the rank of $\Gamma$ is greater than that of $\Pi$.  Concerning the almost $\omega$-free condition, the above proposition has the following consequence:

\begin{cor} \label{fingen}
A free profinite group of finite rank $m$ is almost $\omega$-free if and only if $m>1$.
\end{cor}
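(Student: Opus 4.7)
The plan is to deduce this corollary directly from Proposition~\ref{char0} by realizing every finite-rank free profinite group as the étale fundamental group of a suitably punctured projective line in characteristic zero. Fix an algebraically closed field $k$ of characteristic zero. By \cite[XIII, Cor.~2.12]{Gr}, for any non-empty finite subset $\Sigma \subset \mathbb{P}^1_k$ the group $\pi_1(\mathbb{P}^1_k - \Sigma)$ is free profinite of rank $|\Sigma|-1$; so taking $|\Sigma| = m+1$ realizes the free profinite group of rank $m$ as such a fundamental group. Since the property of being almost $\omega$-free is a purely group-theoretic condition (it refers only to finite embedding problems and open subgroups), the conclusion can be transferred across this isomorphism.

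For the ``if'' direction, suppose $m > 1$, and choose $|\Sigma| = m+1 \geq 3$. Then $\mathbb{P}^1_k - \Sigma$ is not the once or twice punctured projective line, so Proposition~\ref{char0} gives that its fundamental group, which is free profinite of rank $m$, is almost $\omega$-free. For the ``only if'' direction, $m = 0$ realizes the trivial group as $\pi_1(\mathbb{A}^1_k)$ (the once-punctured projective line), and $m = 1$ realizes $\hat{\mathbb{Z}}$ as $\pi_1(\mathbb{P}^1_k - \{0,\infty\})$ (the twice-punctured projective line); both are excluded cases in Proposition~\ref{char0} and hence are not almost $\omega$-free.

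There is no substantive obstacle; the corollary is essentially a repackaging of Proposition~\ref{char0}. The only point worth a sentence in the write-up is the observation that every finite-rank free profinite group occurs as $\pi_1$ of a punctured line over an algebraically closed field of characteristic zero, so that the geometric classification of Proposition~\ref{char0} translates into the stated group-theoretic dichotomy.
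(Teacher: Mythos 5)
Your proposal is correct and is essentially identical to the paper's own proof, which likewise realizes the rank-$m$ free profinite group as $\pi_1$ of the complex projective line minus $m+1$ points and then invokes Proposition~\ref{char0}. The only difference is that you spell out the two directions and the group-theoretic invariance of the almost $\omega$-free condition, which the paper leaves implicit.
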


\begin{proof}
A free profinite group of finite rank $m \ge 0$ is isomorphic to the algebraic fundamental group of the complex projective line minus $m+1$ points.  Thus the result follows from the proposition above.
\end{proof}

Note that this corollary could alternatively be proved using that an open subgroup of index $i$ in a free group of finite rank $m$ is itself free of finite rank $1 + i(m-1)$~\cite[Theorem 3.6.2(b)]{riza}.  Proposition~\ref{char0} could then be deduced from that using that the fundamental group of an $r$-punctured curve of genus $g$ (for $r>0$) is free of rank $2g+r-1.$  On the other hand, while the proof given in \cite[Theorem 3.6.2(b)]{riza} is purely group theoretic, that result can alternatively be deduced easily from the Riemann-Hurwitz formula, using fundamental groups of affine curves in characteristic zero, just as Corollary~\ref{fingen} was deduced above from Proposition~\ref{char0}.

\end{document}